\newtheorem{theorem}{Theorem}[section]
\newtheorem{lemma}[theorem]{Lemma}
\newtheorem{definition}[theorem]{Definition}
\newtheorem{proposition}[theorem]{Proposition}
\newtheorem{remark}[theorem]{Remark}
\newcommand{\N}{\mathbb{N}}
\newcommand{\T}{\mathbb{T}}
\newcommand{\Z}{\mathbb{Z}}
\newcommand{\R}{\mathbb{R}}
\newcommand{\PS}{\mathcal{PS}}
\DeclarePairedDelimiter\floor{\lfloor}{\rfloor}
\numberwithin{equation}{section}
\title{\textbf{Fast ergodicity of rotations on the circle and stability of one-dimensional non-periodic Sturmian ground states}}
\author{Damian G\l odkowski \\  Institute of Mathematics \\ Polish Academy of Sciences \\ \'Sniadeckich 8,  00-656 Warsaw, Poland\\
d.glodkowski@uw.edu.pl
\\ \\ Jacek Mi\c{e}kisz \\ Institute of Applied Mathematics and Mechanics \\ University of Warsaw \\ Banacha 2, 02-097 Warsaw, Poland \\ miekisz@mimuw.edu.pl}
\begin{document}

\maketitle

\begin{abstract}
Rotations on the circle by irrational numbers give rise to uniquely ergodic Sturm dynamical systems. 
We show that rotations by badly approximable irrationals have the property of fast ergodicity. 
It was shown recently that any Sturmian ergodic measure is the unique ground state of a non-frustrated Hamiltonian of one-dimensional classical lattice-gas model.
We use the fast ergodicity property to show that for slowly decaying interactions, $1/r^{\alpha}$ with $1 < \alpha < 3/2$, non-periodic Sturmian ground states are 
stable with respect to periodic configurations consisting of Sturmian words.
\end{abstract}

\section{Introduction}\label{introduction}
It was recently shown that Sturmian sequences generated by irrational rotations may be uniquely determined by the absence of finite patterns including pairs of 1's 
at so-called forbidden distances and the zero-word of length $m$ for some $m\in \N$ \cite{ahj}. This construction gives rise to one-dimensional, non-frustrated, 
lattice-gas (essentially) two-body Hamiltonians with unique ground-state measures supported by Sturmian sequences. 
We consider the problem of the stability of those ground-states with respect to finite-range perturbations of interactions.

First we address the problem of the speed of convergence in the ergodic theorem. One of the classic papers here is \cite{speed1}, 
recently there appeared two papers on arXiv \cite{speed2,speed3}. Our result here concerns fast ergodicity behavior of rotations on the circle 
in the case of badly approximable irrationals, Theorem \ref{badly-theorem}. We use it to prove partial stability of Sturmian ground states 
generated by rotation by badly approximable irrationals if interactions between pairs of 1's decay slowly with distance, $1/r^{\alpha}$ with $1 < \alpha < 3/2$,
Theorem \ref{main}.

We hope that this result can be extended to the full stability since we compare the energy in Sturmian sequences to the periodic ones with no forbidden patterns inside the period (so they seem to have the lowest energy among periodic sequences). However, the general problem remains open. 

\section{Sturmian systems}\label{sequences}

We will consider bi-infinite sequences (words) of two symbols $\{0,1\}$, i.e. elements of $\Omega = \{0,1\}^\Z$.
We will identify the circle $C$ with $\R/\Z$ and consider an irrational rotation by $\varphi$ (which is given by translation on $\R/\Z$ by $\varphi \mod 1$).   

\begin{definition}
Given an irrational $\varphi\in C$ we say that $X\in\{0,1\}^\Z$ is \textbf{generated by $\varphi$} if it is of the following form: 
\begin{equation*}
    X(n) = \begin{cases}
               0               & \text{when} \ x+n\varphi \in P \\
               1                & \text{otherwise}
           \end{cases}
\end{equation*}
where $x\in C$ and $P=[0,\varphi)$ or $P=(0,\varphi]$.
\end{definition}

We call such $X$ a Sturmian sequence corresponding to $\varphi$. Let $T$ be the translation operator, i.e., $T:\Omega \rightarrow \Omega, (T(X))(i) = X(i-1), X \in \Omega$. 
Let $G_{St}$ be the closure (in the product topology of the discrete topology on $\{0,1\}$) of the orbit of $X$ by translations, 
i.e., $G_{St} = \{T^{n} (X), n \geq 0\}^{cl}.$ It can be shown that $G_{St}$ supports exactly one translation-invariant probability measure 
$\rho_{St}$ on $\Omega$. 

We call $(\Omega, T, \rho_{St})$ a uniquely ergodic symbolic dynamical system.

A frequency of a finite pattern in an infinite configuration is defined as the limit of the number
of occurrences of this pattern in a segment of length $L$ divided by $L$ as $L \rightarrow \infty$. All sequences
in any given Sturmian system have the same frequency of any given pattern. We say that configurations satisfy 
the {\bf strict boundary condition} \cite{sbc1} 
or rapid convergence of frequencies to their equilibrium values
\cite{sbc2} if fluctuations of the numbers of occurrences 
of patterns on segments are bounded. It is known 
that Sturmian sequences satisfy 
the strict boundary condition (cf. \cite[Theorem 3.3]{ahj}).

We need one more characterization of Sturmian words in terms of patterns that don't appear in given sequence.

\begin{theorem}\label{forbidden}\emph{\cite[Theorem 4.1]{ahj}}
Let $\varphi\in (\frac{1}{2},1)$ be irrational. Then there exist a natural number $m$ and a set $F\subseteq \N$ of forbidden distances such that Sturmian words generated by $\varphi$ are uniquely determined by the absence of the following patterns: $m$ consecutive 0's and two 1's separated by a distance from $F$.
\end{theorem}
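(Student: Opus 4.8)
The plan is to give the dynamical description of $G_{St}$ and then prove the two inclusions separately. Identify the circle with $\R/\Z$, write $\beta:=1-\varphi\in(0,\tfrac12)$ for the frequency of $1$'s, and let $J$ be the half-open arc of length $\beta$ for which $X(n)=1$ exactly when $x+n\varphi\in J$. By the standard structure of Sturmian words, the distances between consecutive $1$'s take exactly the two consecutive values $p$ and $p+1$, where $p=\lfloor 1/\beta\rfloor\ge 2$; hence the longest run of $0$'s has length $p$ and the smallest distance between any two $1$'s is $p$. I would then set
\[
m:=p+1,\qquad F:=\{\,d\ge 1:\ \|d\varphi\|\ \ge\ \beta\,\},
\]
where $\|\cdot\|$ is the distance to the nearest integer; the only borderline case $\|d\varphi\|=\beta$ is $d=1\in F$, so $F$ is unambiguous, and it is harmless to take $F$ this large — the inclusion ``a Sturmian word avoids these patterns'' forces $F$ to consist only of distances not realized between two $1$'s of a Sturmian word, and enlarging $F$ only strengthens the reverse inclusion. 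That first inclusion is immediate: a run of $p+1$ zeros would sit between two $1$'s at distance $p+2>p+1$; and if $X(i)=X(i+d)=1$ then $x+i\varphi$ and $x+(i+d)\varphi$ lie in the same arc $J$ of length $\beta$, so $\|d\varphi\|<\beta$, i.e. $d\notin F$ (in particular $\{1,\dots,p-1\}\subseteq F$).

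For the reverse inclusion I would first reduce to finite words. Since $G_{St}$ is minimal, a bi-infinite word $Y$ belongs to $G_{St}$ if and only if every finite factor of $Y$ occurs in a Sturmian word for $\varphi$; by compactness this holds once, for every finite factor $w$ of $Y$,
\[
\bigcap_{\,n:\,w_n=1}\big(J-n\varphi\big)\ \cap\ \bigcap_{\,n:\,w_n=0}\big(J^{c}-n\varphi\big)\ \neq\ \emptyset .
\]
So it suffices to prove that any \emph{finite} word $w$ with no run of $m$ zeros and no two $1$'s at a distance in $F$ is realized by some $y\in\R/\Z$. I see two routes to this. The first is direct: one places a sliding half-open arc of length $\beta$ so that, among the orbit points attached to the positions of $w$, it captures exactly the $1$-labelled ones; here the $F$-hypothesis says precisely that the arcs $\{J-n\varphi:w_n=1\}$ are pairwise intersecting, the $m$-hypothesis makes the $1$-positions syndetic with gap $\le p+1$, and one uses the arithmetic of $\{n\varphi\}$ (three-distance theorem) to promote pairwise intersection to a common point lying in none of the $0$-arcs. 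The second is renormalization: the hypotheses force all gaps between consecutive $1$'s of $w$ into $\{p,p+1\}$, so on the interior of $w$ the ``gap word'' over $\{p,p+1\}$ is defined; one checks that the constraints it inherits are again of the form ``no run of $m'$ of one letter, no two copies of the rarer letter at a distance in $F'$'' for the first-return rotation $\varphi'$ (the Ostrowski / continued-fraction renormalization of Sturmian systems), and then induces on $|w|$, which shrinks by a factor $\approx p$ at each step, with a short base case for words of bounded length.

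The hard part, in either route, is the \emph{sufficiency} of these two families of patterns — that no further, genuinely long-range obstruction exists. In the direct route this is a Helly-type difficulty: on a circle, pairwise-intersecting arcs of length $<\tfrac12$ need not have a common point, so one cannot argue by pairs alone and must exploit the rigidity of the rotation orbit together with the syndeticity coming from the $m$-condition, all while keeping the $0$-arcs out of the way. In the renormalization route the crux is to verify that a single renormalization step reproduces constraints of exactly the stated two-body-plus-run-length shape (no new kind of forbidden configuration appears), and to push the finite induction through the renormalization, including the bookkeeping for the two partial gaps at the ends of $w$. Once the finite statement is proved, the claimed characterization of $G_{St}$ by these forbidden patterns follows from the compactness reduction above.
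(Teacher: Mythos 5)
Your setup is sound and matches what the paper actually uses: with $\beta=1-\varphi$ the choice $F=\{d:\|d\varphi\|\ge\beta\}$ is exactly the set singled out in Proposition~\ref{forbidden-pro} ($k\in F\iff k\varphi\in[1-\varphi,\varphi]$), your $m=p+1$ with $p=\lfloor 1/\beta\rfloor$ is the right run-length bound, and your proof of the easy inclusion (Sturmian words avoid these patterns) is correct, as is the compactness reduction of the converse to finite factors. Note that the paper itself does not prove this theorem --- it is imported verbatim from \cite[Theorem 4.1]{ahj} --- so the only question is whether your argument stands on its own.

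It does not, because the entire content of the theorem is the sufficiency direction, and there you only name two candidate strategies and then explicitly concede that the decisive step of each is open. In the ``direct'' route you correctly observe that pairwise intersection of arcs of length $\beta<\tfrac12$ on the circle does not yield a common point (Helly fails on $S^1$ for short arcs), but you do not supply the argument that replaces it; saying one must ``exploit the rigidity of the rotation orbit together with the syndeticity'' is a restatement of the difficulty, not a proof. In the renormalization route the crux --- that one continued-fraction induction step transforms the constraints ``no run of $m$ zeros, no two $1$'s at a distance in $F$'' into constraints of exactly the same two-body-plus-run-length form for the induced rotation, with controlled boundary effects --- is precisely what needs to be verified, and you state it as something ``one checks'' without checking it. This is where a genuinely new forbidden configuration could in principle appear and where the proof in \cite{ahj} does its real work. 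As written, the proposal establishes that $F$ and $m$ are \emph{necessary} constraints and correctly frames the problem, but leaves the theorem's actual assertion --- that no further constraints are needed --- unproved.
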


To characterize Sturmian words generated by irrationals from $(0,\frac{1}{2})$ we have to change the roles of 0's and 1's. We will show that $F$ can also be described by the rotation. 

\begin{proposition}\label{forbidden-pro}
The set $F$ from Theorem \ref{forbidden} may be chosen in the following way: 
$$k\notin F \iff \exists y\in [\varphi , 1) \ y+k\varphi \in [\varphi, 1),$$
or equivalently 
$$ k\in F \iff k\varphi \in [1-\varphi, \varphi].  $$
\end{proposition}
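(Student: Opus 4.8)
The plan is to identify the proposed set with the set of distances that \emph{cannot} occur between two $1$'s in any Sturmian word generated by $\varphi$, and then to deduce that it is a legitimate choice in Theorem~\ref{forbidden} by comparing it with the (a priori unspecified) set $F$ produced there. As a preliminary step I would check that the two displayed descriptions agree. Writing $Q=[\varphi,1)\subset\R/\Z$, the condition ``$\exists\, y\in[\varphi,1)$ with $y+k\varphi\in[\varphi,1)$'' is exactly the non-emptiness of $Q\cap(Q-k\varphi)$ in $\R/\Z$. Since $Q$ is an arc of length $1-\varphi<\tfrac12$, a short case analysis on $\beta:=k\varphi\bmod 1$ (the generic cases $\beta\in(0,\varphi)$ and $\beta\in(\varphi,1)$, together with $\beta=\varphi$, which happens only for $k=1$) shows that $Q\cap(Q-k\varphi)\neq\varnothing$ precisely when $\beta\in(0,1-\varphi)\cup(\varphi,1)$, i.e.\ precisely when $k\varphi\notin[1-\varphi,\varphi]$. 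As $k\varphi$ is irrational, $\beta$ is never $0$, $1-\varphi$, nor (for $k\neq1$) equal to $\varphi$, so the two formulas define the same subset of $\N$, which I denote $F^{\ast}$.

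Next I would prove that $F^{\ast}=\N\setminus A$, where $A$ is the set of all $k$ for which some Sturmian word generated by $\varphi$ has two $1$'s at distance $k$. For one inclusion, suppose $k\in A$ is realised by a word $X$ with $X(n)=X(n+k)=1$; putting $y=x+n\varphi$, the hypothesis that $X$ is generated by $\varphi$ forces both $y$ and $y+k\varphi$ to lie in $Q$ when $P=[0,\varphi)$, or in $Q\cup\{0\}$ when $P=(0,\varphi]$. Using $0\notin Q$ together with the case analysis above, one checks that in either case $Q\cap(Q-k\varphi)\neq\varnothing$, hence $k\notin F^{\ast}$. Conversely, if $k\notin F^{\ast}$ then $Q\cap(Q-k\varphi)$ contains a genuine subinterval; choosing $y$ in it and taking $x=y$, $P=[0,\varphi)$ produces a Sturmian word generated by $\varphi$ with $X(0)=X(k)=1$, so $k\in A$. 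Thus $F^{\ast}$ is exactly the set of distances that are forbidden between $1$'s in Sturmian words generated by $\varphi$.

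To conclude, let $F$ and $m$ be as supplied by Theorem~\ref{forbidden}. Because that characterisation is an equivalence, every Sturmian word generated by $\varphi$ contains no $m$ consecutive $0$'s and no two $1$'s at a distance in $F$; in particular no such word realises any $k\in F$, so $F\subseteq\N\setminus A=F^{\ast}$. Now replace $F$ by $F^{\ast}$. Any $Y\in\Omega$ avoiding $m$ consecutive $0$'s and avoiding two $1$'s at every distance in $F^{\ast}$ a fortiori avoids two $1$'s at every distance in $F$ (since $F\subseteq F^{\ast}$), hence is Sturmian generated by $\varphi$ by Theorem~\ref{forbidden}; conversely every Sturmian word generated by $\varphi$ avoids $m$ consecutive $0$'s and, since $F^{\ast}=\N\setminus A$, also avoids two $1$'s at every distance in $F^{\ast}$. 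Therefore $F^{\ast}$ is also a valid choice of forbidden-distance set, which is the assertion of the proposition.

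The only delicate point I anticipate is the bookkeeping with half-open intervals and the two admissible choices of $P$: one must verify that the at most countably many boundary coincidences $x+n\varphi\in\{0,\varphi\}$ neither create nor destroy a realisable distance, which is precisely where the irrationality of $k\varphi$ and the elementary observation $0\notin[\varphi,1)$ enter. Everything else is a routine arc-intersection computation.
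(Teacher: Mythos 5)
Your proof is correct and follows essentially the same route as the paper's: an arc-intersection computation identifying the two descriptions, then the observation that the resulting set is exactly the set of distances not realisable between two $1$'s, which makes it a valid replacement for the $F$ of Theorem~\ref{forbidden}. You are in fact slightly more explicit than the paper on two points it leaves implicit --- the boundary bookkeeping for the choice $P=(0,\varphi]$ and the final step that enlarging $F$ to the full set of non-realisable distances preserves the characterisation --- but the argument is the same.
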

\begin{proof}
We start with the proof of equivalence of the above statements. Let's see that 
$$\neg (\exists y\in [\varphi , 1) \ y+z \in [\varphi, 1)) \iff \forall y\in[\varphi, 1) \ y+z\in [0,\varphi) \iff \{y+z: y\in [\varphi,1)\} \subseteq [0,\varphi). $$
The arc $\{y+z: y\in [\varphi,1)\}$ is contained in $[0, \varphi)$ if and only if its endpoints are in $[0,\varphi]$ which means that $z \in [1-\varphi, \varphi]$. Picking $z=k\varphi$ completes the proof of this part. 

For the proof that $F$ is a good set of forbidden distances proceed as follows. Fix a Sturmian word $X$ generated by $\varphi$ with an initial point $x$. If $X(n)=X(n+k)=1$ then $x+n\varphi\in [\varphi,1)$ and $ x+(n+k)\varphi \in [\varphi,1)$, so for $y=x+n\varphi$ we have $y, y+k\varphi \in [\varphi,1)$ which proves that $k\notin F$. Conversely, assume that $k\notin F$. Let $y$ be such that $y, y+k\varphi \in [\varphi,1)$. Then $Y(0)=Y(k)$ where $Y$ is the sequence generated by $\varphi$ with initial point $y$, so $k$ is not a forbidden distance. 
\end{proof}

Given a set of forbidden distances we may easily construct non-frustrated Hamiltonians for which the unique ground-state consists exactly of Sturmian words generated by $\varphi$. Simply we need to assign positive energies to all forbidden patterns and zero otherwise (for more details see \cite[Theorem 5.2]{ahj}). 

\section{Badly approximable numbers and fast ergodicity}\label{badly-app}

In most of the results we will consider rotations by badly approximable numbers, which have better ergodic behavior.  

\begin{definition}\label{badly-def}
We say that a number $\varphi$ is \textbf{badly approximable} if there exists $c>0$ such that 
$$\left | \varphi - \frac{p}{q} \right | > \frac{c}{q^2} $$
for all rationals $\frac{p}{q}$. 
\end{definition}

From the classical theorem of Liouville it follows that algebraic numbers of degree 2 are badly approximable. 

\begin{theorem}
(Liouville) If $\varphi$ is an algebraic number of degree $d>1$ then there exists $c>0$ such that 
$$\left | \varphi - \frac{p}{q} \right | > \frac{c}{q^d} $$
for all rationals $\frac{p}{q}$.
\end{theorem}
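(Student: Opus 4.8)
The plan is to run Liouville's classical argument, combining the minimal polynomial of $\varphi$ with the mean value theorem. Let $f(x) = a_d x^d + a_{d-1} x^{d-1} + \dots + a_0 \in \Z[x]$ be obtained from the minimal polynomial of $\varphi$ over $\Q$ by clearing denominators, so that $f(\varphi) = 0$ and $\deg f = d$; since $\varphi$ has degree $d > 1$, this $f$ is irreducible over $\Q$ and hence has no rational root. Fix a rational $\frac{p}{q}$ with $q \geq 1$. If $\left|\varphi - \frac{p}{q}\right| > 1$ the desired bound holds for any $c \leq 1$, so from now on we may assume $\left|\varphi - \frac{p}{q}\right| \leq 1$.

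The first key step is the lower bound on $\left|f(p/q)\right|$. The quantity $q^d f(p/q) = a_d p^d + a_{d-1} p^{d-1} q + \dots + a_0 q^d$ is an integer, and it is nonzero because $f$ has no rational root; therefore $\left|f(p/q)\right| \geq q^{-d}$. The second key step is the matching upper bound: since $f(\varphi) = 0$, the mean value theorem gives $f(p/q) = f'(\xi)\,(p/q - \varphi)$ for some $\xi$ between $\varphi$ and $p/q$, in particular with $|\xi - \varphi| \leq 1$. Setting $M := \max_{|t - \varphi| \leq 1} \left|f'(t)\right|$, a finite positive constant depending only on $\varphi$, we obtain $\left|f(p/q)\right| \leq M\,\left|\varphi - p/q\right|$.

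Combining the two inequalities yields $\left|\varphi - \frac{p}{q}\right| \geq M^{-1} q^{-d}$ in the remaining case, so choosing $c := \tfrac12 \min\{1, M^{-1}\}$ gives the strict inequality $\left|\varphi - \frac{p}{q}\right| > c\, q^{-d}$ for every rational $\frac{p}{q}$. I do not expect a genuine obstacle here: the only delicate points are the non-vanishing of $f(p/q)$ — which is precisely where the hypothesis $d > 1$, i.e.\ the irreducibility of $f$ over $\Q$, is used — and the harmless shrinking of the constant needed to convert the non-strict estimate into a strict one. Finally, specializing to $d = 2$ recovers the assertion used above that quadratic algebraic irrationals are badly approximable in the sense of Definition \ref{badly-def}.
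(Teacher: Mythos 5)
Your proof is correct and is the standard Liouville argument (integrality of $q^d f(p/q)$ plus the mean value theorem bound on $f$ near $\varphi$); the paper itself states this classical theorem without proof, so there is nothing to compare against. The one point worth being explicit about is that $M=\max_{|t-\varphi|\le 1}|f'(t)|>0$ because $f'$ is a nonzero polynomial of degree $d-1\ge 1$, which you implicitly use when writing $M^{-1}$; otherwise the case split and the final shrinking of the constant are handled cleanly.
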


The above theorem was originally used to show that there exist transcendental numbers by finding numbers which do not satisfy the above inequality for any $d$. In particular there are numbers which are not badly approximable. However, the class of badly approximable numbers is much larger than the class of algebraic numbers of degree 2. Both classes can be naturally described in the terms of continued fractions (cf. \cite[Proposition 3.10, Theorem 3.13]{Einsiedler}).

Now we reformulate the definition of badly approximable numbers to apply them in the context of rotations. 

\begin{lemma}\label{badly-lemma}
Assume that $\varphi\in \T$ is badly approximable. Then there exists $c>0$ such that for each $k\in \N$ we have $k\varphi \in (\frac{c}{k}, 1-\frac{c}{k})$. 
\end{lemma}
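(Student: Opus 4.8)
The plan is to reformulate both the hypothesis and the conclusion in terms of the distance $\|t\|:=\min_{n\in\Z}|t-n|$ of a real number to the nearest integer. First observe that the conclusion ``$k\varphi\in(\tfrac{c}{k},1-\tfrac{c}{k})$'', read in $\T\cong[0,1)$, is equivalent to the single inequality $\|k\varphi\|>\tfrac{c}{k}$, since $\|k\varphi\|=\min(\{k\varphi\},\,1-\{k\varphi\})$. So it suffices to produce a constant $c>0$ with $\|k\varphi\|>c/k$ for every $k\in\N$.

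Next I would unwind Definition \ref{badly-def}: fix $c>0$ with $|\varphi-p/q|>c/q^2$ for all rationals $p/q$ (with $q\ge 1$; if the definition is only stated for fractions in lowest terms, one first checks that it then holds for all integer pairs $(p,q)$ by clearing the common factor, which only improves the bound). Given $k\in\N$, let $p\in\Z$ be an integer nearest to $k\varphi$, so that $\|k\varphi\|=|k\varphi-p|=k\,|\varphi-\tfrac{p}{k}|$. Applying the defining inequality with denominator $k$ gives $|\varphi-\tfrac{p}{k}|>c/k^2$, hence $\|k\varphi\|>c/k$, which is exactly the desired bound with the same constant $c$.

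Finally I would record two small points that make the statement literally correct. Since $\varphi$ is irrational, $k\varphi$ is never an integer, so $\|k\varphi\|>0$ and the above inequality is strict, which places $k\varphi$ in the \emph{open} interval. Moreover, taking $k=1$ in the badly approximable inequality (with $p$ the nearest integer to $\varphi$) forces $c<\|\varphi\|\le\tfrac12$, so $c/k<\tfrac12$ for all $k\ge1$ and the interval $(\tfrac{c}{k},1-\tfrac{c}{k})$ is nonempty; thus no separate treatment of small $k$ is needed.

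There is essentially no hard step here: the content is just the elementary identity $\|k\varphi\|=k\,|\varphi-\tfrac{p}{k}|$ for the best integer approximation $p$ of $k\varphi$, combined with the defining inequality for badly approximable numbers. The only thing requiring a moment's care is the bookkeeping around non-reduced fractions and the check that the constant works uniformly over all $k$ (including $k=1$), so that the open-interval form holds without exceptions.
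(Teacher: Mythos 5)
Your proof is correct and follows essentially the same route as the paper: multiply the Diophantine inequality $|\varphi-p/q|>c/q^2$ through by $q=k$ and specialize $p$ to an integer closest to $k\varphi$. You are in fact slightly more careful than the paper, which takes only $p=\floor{k\varphi}$ (yielding just the lower endpoint $\{k\varphi\}>c/k$), whereas your choice of the nearest integer gives both sides of the interval at once, and your remarks about non-reduced fractions and $c<\tfrac12$ are harmless bonus bookkeeping.
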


\begin{proof}
Multiplication of both sides of the inequality from the Definition \ref{badly-def} by $q$ gives $$|q\varphi -p|>\frac{c}{q}. $$
Now by putting $q=k$ and $p=\floor{k\varphi}$ we get 
$$|k\varphi \mod 1| = |k\varphi-\floor{k\varphi}| > \frac{c}{k} $$
which completes the proof. 
\end{proof}

The next theorem will play a crucial role in the proof of partial stability of Sturmian ground state. We will see that ergodicity of rotation by a badly approximable number can be reached quickly i.e. for given arc $P$ of length greater than $\frac{1}{2}$ we are in $P$ frequently enough after a small number of rotations by $k\varphi$ for $k\in\N$. 

\begin{theorem}\label{badly-theorem}
Let $\varphi\in \T$ be badly approximable and $P\subseteq \T$ be the arc of length at least $\frac{1}{2}$. For given $x_0\in \T$ and each $k\in \N$ define sequences $(x_i^k)_{i\in\N}$ in the following way: 
\begin{itemize}
      \item $x_0^k=x_0$,
    \item $x_{i+1}^k=x_i^k+k\varphi$. 
\end{itemize}
Then there exist $r>0$ and $d\in \N$ (independent of $x_0$) such that $$ |\{x_i^k: i\in \{1,2,\dots, dk\}, x_i^k\in P\}| \geq rk$$ for sufficiently large $k$. 
\end{theorem}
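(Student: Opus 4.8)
The plan is to exploit the badly approximable hypothesis via Lemma \ref{badly-lemma}, which tells us that $k\varphi \in (\tfrac{c}{k}, 1-\tfrac{c}{k})$, i.e. the step size $k\varphi$ (mod $1$) is never too close to $0$ or $1$. Think of the sequence $(x_i^k)_i$ as the orbit of $x_0$ under rotation by the single angle $\theta := k\varphi \pmod 1$. We want to show that within the first $dk$ steps of this rotation we land in the arc $P$ (of length $\geq \tfrac12$) at least $rk$ times. The key point is that $\theta$ is ``macroscopically large'': $\tfrac{c}{k} \le \theta \le 1 - \tfrac{c}{k}$, so over $dk$ steps the orbit wraps around the circle a substantial number of times — roughly $dk\theta$ times if $\theta \le \tfrac12$, and similarly using $1-\theta$ otherwise — so it should hit the big arc $P$ in a fixed proportion of steps.

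First I would reduce to the case $\theta \le \tfrac12$ by symmetry (replacing $\varphi$ by $1-\varphi$ reverses the orbit). So assume $\tfrac{c}{k} \le \theta \le \tfrac12$. Next, the core combinatorial step: partition the index range $\{1,\dots,dk\}$ into consecutive blocks on which the partial orbit $x_i^k$ makes exactly one full loop around the circle, i.e. increases (mod $1$, unwrapped) by an amount in $[1, 1+\theta)$. Each such block has length between $\lfloor 1/\theta \rfloor$ and $\lceil 1/\theta \rceil + 1$, hence at most $\tfrac1\theta + 2 \le \tfrac{k}{c} + 2$ steps. Within a single such block, the points $x_i^k$ are $\theta$-separated and sweep once around the circle; since $P$ is an arc of length $\ge \tfrac12$, the number of block-points landing in $P$ is at least $\lfloor \tfrac{1/2}{\theta}\rfloor - 1 \ge \tfrac{1}{2\theta} - 2$, i.e. at least (roughly) half the points in the block. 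Now choose $d$ large enough that $\{1,\dots,dk\}$ contains at least, say, $\tfrac{d}{2}$ complete such blocks: since each block has $\le \tfrac{k}{c}+2$ steps, $dk$ steps comfortably contain $\gtrsim \tfrac{dk}{k/c+2} \ge \tfrac{cd}{2}$ full blocks once $k$ is large. Summing the per-block count over these $\Theta(d)$ blocks gives $\gtrsim d \cdot \tfrac{1}{2\theta} \cdot \tfrac12$ hits; but we also know $\tfrac{1}{\theta} \ge 2$ trivially, so this is at least a constant times $d$ — and we need it to be $\ge rk$, which forces us to use the \emph{lower} bound $\theta \ge \tfrac{c}{k}$ only to control block \emph{length}, while the number of hits per block is $\ge \tfrac1{2\theta} - 2$ which could be as small as $O(1)$ if $\theta$ is close to $\tfrac12$. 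So I would instead count hits directly: in $dk$ steps with step size $\theta \in [\tfrac{c}{k},\tfrac12]$, the unwrapped orbit travels a total distance $dk\theta \ge dk \cdot \tfrac{c}{k} = cd$, hence completes at least $cd - 1$ full loops, and each full loop contributes $\ge \tfrac{1}{2\theta} - 2 \ge $ (when $\theta \le \tfrac12$) at least... here is the tension.

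Let me restructure that last count to avoid the tension: the number of $i \in \{1,\dots,N\}$ with $x_i^k \in P$ equals $\sum_{i=1}^N \mathbf 1_P(x_0 + i\theta)$, and a clean way to lower-bound a Birkhoff-type sum for a rotation by a single angle is to compare it with $N\cdot |P|$ up to a discrepancy error of $O(1/\theta)$ (a three-distance / Denjoy–Koksma estimate: the sum over one full period of $q$ points, where $q \approx 1/\theta$, is within $1$ of $q|P|$). Thus $\#\{i \le N : x_i^k \in P\} \ge N|P| - C/\theta \ge \tfrac{N}{2} - C/\theta$. Taking $N = dk$ and using $\tfrac1\theta \le \tfrac{k}{c}$ gives $\#\{\,\cdot\,\} \ge \tfrac{dk}{2} - \tfrac{Ck}{c} = k\bigl(\tfrac d2 - \tfrac Cc\bigr)$, which is $\ge rk$ with $r = \tfrac d4$ once we pick $d > \tfrac{4C}{c}$. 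The main obstacle, and the step I'd spend the most care on, is establishing the uniform Denjoy–Koksma / three-distance bound ``$\bigl|\sum_{i=1}^{q}\mathbf 1_P(x_0+i\theta) - q|P|\bigr| \le $ const'' for $q = \lfloor 1/\theta\rfloor$ or the relevant continued-fraction denominator of $\theta$, uniformly in $x_0$ and in the arc $P$ — the constant must not depend on $k$. For an interval $P$ this is standard (the error is at most the number of endpoints of $P$, i.e. $2$, by the three-distance theorem applied to the points $\{i\theta\}$), but I would want to state it carefully, handle the $\theta > \tfrac12$ case by the $1-\theta$ symmetry, and make sure the passage from ``one near-period of $q$ steps'' to ``$dk$ steps'' only loses an $O(1)$ additive error per near-period and that the number of near-periods is $\Theta(d)$ independently of $k$. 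Everything else — the reduction $\varphi \leftrightarrow 1-\varphi$, choosing $d$ then $r$, the ``sufficiently large $k$'' to absorb rounding — is routine bookkeeping.
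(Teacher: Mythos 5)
Your overall strategy is the same as the paper's: view $(x_i^k)$ as the orbit of a single rotation by $\theta=k\varphi$, use Lemma \ref{badly-lemma} to get $\theta\in(\tfrac{c}{k},1-\tfrac{c}{k})$, reduce to $\theta\le\tfrac12$, and count (number of full loops in $dk$ steps) $\times$ (hits of $P$ per loop). You even correctly identify the tension in that product when $\theta$ is not small. But the estimate you ultimately commit to in order to bypass that tension is false. The total discrepancy of $\{x_0+i\theta\}_{i\le N}$ against an arc is not $O(1/\theta)$: Denjoy--Koksma gives an error of up to $\mathrm{Var}(\mathbf 1_P)=2$ \emph{per block} of $q\approx 1/\theta$ steps, and these errors accumulate over the $\approx N\theta$ blocks, so the correct bound is of the form $N|P|-O(N\theta)-O(1/\theta)$. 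When $\theta$ is bounded away from $0$ the term $O(N\theta)$ is of the same order as the main term $N|P|=N/2$ and the lower bound collapses. This is not just a lost constant: take $\theta=\tfrac13+\varepsilon$ with $\varepsilon$ tiny and $P$ an arc of length exactly $\tfrac12$ positioned so that only one of the three clusters $x_0+\tfrac{j}{3}$, $j=0,1,2$, lies in $P$; then for $N\ll 1/\varepsilon$ the count is about $N/3$, not $N/2-C/\theta$. So the inequality $\#\{i\le N: x_i^k\in P\}\ge N|P|-C/\theta$ on which your final arithmetic rests is simply not true, and since $\theta=k\varphi$ can perfectly well land near $\tfrac13$ or $\tfrac12$, this regime cannot be excluded.

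The theorem itself survives (in the example above $N/3\ge rk$ is still fine), but to prove it you need to return to your first, block-based count and resolve the tension by a case split rather than by the discrepancy shortcut. That is what the paper does: writing $\theta\in(\tfrac1{n+1},\tfrac1n)$ with $2\le n\le dk$, it keeps the exact product $\lfloor\tfrac{dk}{n+1}\rfloor\cdot\lfloor\tfrac n2\rfloor$ and bounds it below by $\tfrac d6 k$ via concavity for moderate $n$, and for the remaining range $2\le n\le 4$ (i.e.\ $\theta$ bounded below, exactly where your estimate breaks) it uses a direct observation that among any four consecutive points at least one lies in $P\supseteq[\tfrac12,1)$, giving $\ge\lfloor\tfrac{dk}4\rfloor$ hits. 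If you replace your final paragraph with such a case analysis (or, alternatively, carry out an Erd\H{o}s--Tur\'an-type discrepancy bound, which for $\theta=k\varphi$ gives an error $O(k\sqrt d)$ rather than $O(k)$ and still suffices after choosing $d$ large), the argument closes.
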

\begin{proof}
Fix a natural number $k$. To simplify the notation let us put $x_i=x_i^k$. Without loss of generality we can assume that $P$ contains $[\frac{1}{2},1)$ and that $k\varphi\in (0,\frac{1}{2})$ since the other case is similar. 

Let $c>0$ be as in Lemma \ref{badly-lemma} and $n\in \N$ be such that $k\varphi \in (\frac{1}{n+1}, \frac{1}{n})$. We can assume that $c=\frac{1}{d}$ for some $d\in \N$ so we have $$k\varphi\in \left (\frac{1}{dk}, 1-\frac{1}{dk}\right ).$$
In particular $2\leq n\leq dk$. We consider three cases: 
\begin{enumerate}
    \item $4\leq n\leq \frac{dk}{2}-1$. \\
    We observe that if $x_i\notin [\frac{1}{2},1)$ and $x_{i+1}\in [\frac{1}{2},1)$ then 
    $$x_{i+1}\in \left [\frac{1}{2},\frac{1}{2}+\frac{1}{n} \right ), x_{i+2}\in \left [\frac{1}{2},\frac{1}{2}+\frac{2}{n} \right ), \dots, x_{i+s}\in \left [\frac{1}{2},\frac{1}{2}+\frac{s}{n} \right )$$
    where $s=\floor{\frac{n}{2}}$, so we also have $x_{i+j}\in [\frac{1}{2},1)\subseteq P$ for $j=1,2,\dots, s$. If we do $n+1$ rotations by $k\varphi$ we will get one full rotation jointly so we will hit the interval $P$ at least $s$ times. Hence after $dk$ rotations we hit $P$ at least $\floor{\frac{dk}{n+1}}s$ times which gives an estimate 
    $$|\{x_i: i\in \{1,2,\dots, dk\}, x_i\in P\}| \geq \left \lfloor \frac{dk}{n+1} \right \rfloor s \geq \left (\frac{dk}{n+1}-1 \right ) \left ( \frac{n}{2}-1 \right ). $$
    Consider the function $$f(x)=(\frac{dk}{x+1}-1)(\frac{x}{2}-1).$$ Then we have 
    \begin{gather*}
        \frac{1}{dk}f^{\prime\prime}(x)= -\frac{3}{(x+1)^3}<0
    \end{gather*}
    for $x>0$ so $f$ is concave on $(0,\infty)$. Hence we can estimate $f(x)\geq \min \{f(4), f(\frac{dk}{2}-1)\}$ when $x\in [4,\frac{dk}{2}-1]$. We see that 
    $$f(4)=(\frac{dk}{5}-1)(2-1)=\frac{dk-5}{5}\geq \frac{d}{6}k$$
    and 
    $$f(\frac{dk}{2}-1)=(2-1)(\frac{dk-6}{4})=\frac{dk-6}{4}\geq \frac{d}{6}k$$
    for sufficiently large $k$. In particular $$|\{x_i: i\in \{1,2,\dots, dk\}, x_i\in P\}| \geq f(n)\geq \frac{d}{6}k. $$
    \item $\frac{dk}{2}-1< n\leq dk$.\\
    Again, if we rotate $dk$ times by $k\varphi$ we will hit $P$ at least $s=\floor{\frac{n}{2}}$ times so we get $$|\{x_i: i\in \{1,2,\dots, dk\}, x_i\in P\}| \geq \left \lfloor \frac{n}{2} \right \rfloor >\frac{n}{2}-1>\frac{dk-2}{4}-1\geq \frac{d}{6}k$$ for sufficiently large $k$.
    \item $2\leq n\leq 4$. \\
    We observe that for every $i$ at least one of the consecutive points $x_i, x_{i+1}, x_{i+2}, x_{i+3}$ is in $P$. Indeed, $k\varphi>\frac{1}{5}$ so if $x_i, x_{i+1}, x_{i+2}\in [0,\frac{1}{2})$, then $x_{i+1}\in (\frac{1}{5}, \frac{1}{2})$, $x_{i+2}\in (\frac{2}{5}, \frac{1}{2})$ and so $x_{i+3}\in (\frac{3}{5},1)\subseteq P$. Hence we get estimate 
    $$|\{x_i: i\in \{1,2,\dots, dk\}, x_i\in P\}| \geq \left \lfloor \frac{d}{4}k \right \rfloor\geq \frac{d}{6}k.$$
\end{enumerate}
To complete the proof it is enough to put $r=\frac{d}{6}$.
\end{proof}

\section{Stability of Sturmian ground states}

As we mentioned at the end of Section \ref{sequences} we may view Sturmian words as zero-energy configurations of non-frustrated Hamiltonians. We will consider a natural family of such Hamiltonians, namely for $\alpha>1$ let $H_\alpha$ denote the Hamiltonian which assigns energy $\frac{1}{n^\alpha}$ to each pair of 1's in forbidden distance $n$ and energy 1 to the forbidden sequence of consecutive 0's. We would like to know when those Hamiltonians are stable against small perturbations in finite range. We start with the necessary definitions. 

\begin{definition}\label{energy-density}
For a one-dimensional Hamiltonian $H$ and a finite word $w$ denote by $H(w)$ joint energy of all patterns in $w$ given by $H$. For $X\in \{0,1\}^\Z$ we define the \textbf{energy density} of $X$ as the limit 
$$\rho_H(X)=\liminf_{k \rightarrow \infty} \frac{ H(X([-k,k]))}{2k+1}$$
where $X([-k,k])=(X(i))_{i=-k}^k$.
\end{definition}

In the light of Theorem \ref{forbidden} energy density of Sturmian words is zero (which is minimal possible) when they are considered as ground-state configurations of $H_\alpha$. 
It is well-known that ground-state configurations of any Hamiltonian minimize the energy density (cf. \cite[Theorem 5]{stable}). 

\begin{definition}
Let $H$ be a Hamiltonian and $P$ be a finite set of patterns. For $\lambda>0$ we say that a Hamiltonian $\widetilde{H}$ is a $(\lambda,P)$\textbf{-perturbation} of $H$ if for each pattern $p\in P$ we have $|\Phi(p)-\widetilde{\Phi}(p)|<\lambda$ and $\Phi(q)=\widetilde{\Phi}(q)$ when $q\notin P$, where $\Phi(p), \widetilde{\Phi}(p)$ denote the energy of $p$ given by $H$ and $\widetilde{H}$ respectively.
\end{definition}

\begin{definition}
Let $H$ be a one-dimensional Hamiltonian and $\mathcal{W}\subseteq \{0,1\}^\Z$ be some set of bi-infinite words. We say that a ground-state configuration $X\in\{0,1\}^\Z$ of $H$ is \textbf{stable with respect to} $\mathcal{W}$ (against small perturbations) if for every finite set of patterns $P$ there exists $\lambda>0$ such that if $\widetilde{H}$ is a $(\lambda,P)$-perturbation of $H$, then $\rho_{\widetilde{H}}(X)\leq \rho_{\widetilde{H}}(W)$ for all $W\in\mathcal{W}$. 

If $X$ is stable with respect to the whole space $\{0,1\}^\Z$ then we say that $X$ is \textbf{stable} (against small perturbations). 
\end{definition}

Our aim is to show that if $\alpha$ is small enough then Sturmian words considered as ground-state configurations of $H_\alpha$ are stable with respect to periodic sequences which are close to Sturmian ones. Those sequences have a small number of forbidden patterns inside so we expect that their energy density is lower than the energy density of other periodic sequences. 

\begin{definition}
We say that a sequence $X\in\{0,1\}^\Z$ is \textbf{periodically Sturmian} word generated by $\varphi$ if it is periodic and its period is a subword of a Sturmian sequence generated by $\varphi$. We denote by $\PS_\varphi$ the set of all periodically Sturmian words generated by $\varphi$.
\end{definition}

As we showed in \cite{nonstability} Sturmian words are not stable with respect to $\PS_\varphi$ when $\alpha > 3$, so it is natural to ask what happens 
when $\alpha$ is smaller.

We can now formulate the main theorem of our work. 

\begin{theorem}\label{main}
Assume that $\varphi\in (\frac{3}{4},1)$ is badly approximable and that $\alpha\leq \frac{3}{2}$. If $X$ is a Sturmian word generated by $\varphi$ then $X$ 
is stable with respect to $\PS_\varphi$ for the Hamiltonian $H_\alpha$. 
\end{theorem}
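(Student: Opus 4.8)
The plan is to compare the energy density of the Sturmian word $X$ (which is $0$ for $H_\alpha$) against that of an arbitrary periodically Sturmian word $W\in\PS_\varphi$ under a $(\lambda,P)$-perturbation $\widetilde H$. Since $X$ has no forbidden patterns, $\rho_{\widetilde H}(X)$ is bounded above by a term coming only from the finitely many patterns in $P$ on which $\widetilde H$ differs from $H_\alpha$; because $X$ satisfies the strict boundary condition, the frequency of any fixed pattern in $P$ exists, so $\rho_{\widetilde H}(X)\le C\lambda$ for a constant $C=C(P)$ depending only on $P$ (essentially the number of patterns in $P$ times their maximal frequency contribution). Thus it suffices to show that $\rho_{\widetilde H}(W)\ge c_0>0$ for some constant $c_0$ \emph{independent of} $W$, and then choose $\lambda$ small enough that $C\lambda<c_0$. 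Note $\rho_{\widetilde H}(W)\ge \rho_{H_\alpha}(W) - C'\lambda$ as well (perturbation changes the energy density of $W$ by at most $C'\lambda$ with $C'=C'(P)$), so the crux is a uniform lower bound $\rho_{H_\alpha}(W)\ge c_0$.

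First I would record the structure of a periodically Sturmian word $W$ with period $q$: the period is a factor of a genuine Sturmian word, so within one period there are \emph{no} forbidden patterns, but the word $W$ is $q$-periodic and $q$ is \emph{not} of the form $p_n$ (a continued-fraction denominator), so the bi-infinite word $W$ is not Sturmian and \emph{must} contain forbidden patterns that straddle the period boundary — i.e.\ pairs of $1$'s at distance $q$, $2q$, $3q,\dots$ whenever such a distance lies in the forbidden set $F$. Using Proposition \ref{forbidden-pro}, $jq\in F$ iff $jq\varphi\bmod 1\in[1-\varphi,\varphi]$; since $\varphi>3/4$ this target arc has length $2\varphi-1>1/2$. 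Now I apply Theorem \ref{badly-theorem} with $k=q$, base point $x_0=q\varphi$, and $P=[1-\varphi,\varphi]$ (an arc of length $>1/2$): it gives $r>0$, $d\in\N$ with at least $rq$ of the multiples $jq\varphi$, $j\le dq$, landing in $[1-\varphi,\varphi]$, i.e.\ at least $rq$ of the distances $q,2q,\dots,dq\cdot q$ are forbidden. Each such forbidden distance $jq$ contributes, in every block of $W$ of length one period, an energy at least $\tfrac{1}{(jq)^\alpha}$ for the pair of $1$'s it creates (here I use that the density of $1$'s in $W$ is bounded below, a fixed positive constant since the period is Sturmian). Summing over the $\ge rq$ bad values $j\in\{1,\dots,dq\}$ and using $jq\le dq^2$ gives an energy per unit length of order $\sum_{j} \tfrac{1}{(jq)^\alpha}\gtrsim rq\cdot \tfrac{1}{(dq^2)^\alpha}=r d^{-\alpha} q^{1-2\alpha}$ in the worst case, which for $\alpha>1/2$ tends to $0$ — so this naive bound is not enough, and I must be more careful about \emph{where} the forbidden multiples sit.

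The refinement: Theorem \ref{badly-theorem} actually produces the hits among the \emph{first} $dk$ steps, and its proof shows they occur in runs near the start of each full revolution; more importantly, among $j\in\{1,\dots,dq\}$ with $jq\in F$ there must be many with $j$ \emph{small} — indeed, re-reading the three cases of the proof of Theorem \ref{badly-theorem}, the $\ge rk$ returns to $P$ are distributed so that a positive fraction occur with index $i=O(n)=O(1/(k\varphi\bmod 1))$, hence with $j=O(1)$ relative to the natural scale; equivalently, a positive proportion of the forbidden distances are $O(q\cdot q^{0})$ — no, the honest statement is that one gets $\gtrsim r q$ forbidden distances all of size $\le dq^2$, but crucially \emph{at least one}, and in fact $\gtrsim$ a constant number, of size $O(q)$ coming from the first revolution. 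A single forbidden distance of size $\Theta(q)$ already contributes energy density $\gtrsim \tfrac{1}{q}\cdot\tfrac{1}{q^\alpha}=q^{-1-\alpha}$, still vanishing. The correct accounting, which is the real content, is to \emph{sum the tail}: the forbidden distances $jq$ with $j$ ranging over a positive-density subset of $\{1,\dots,dq\}$ give energy density $\gtrsim \tfrac1q\sum_{j\in S}\tfrac{1}{(jq)^\alpha}$ where $|S|\ge rq$; the minimum of this over all such $S$ is attained when $S$ is the top $rq$ values, giving $\gtrsim \tfrac1q\cdot rq\cdot(dq^2)^{-\alpha}=r d^{-\alpha}q^{-2\alpha}$, whereas comparing to $\rho_{\widetilde H}(X)\le C\lambda$ only needs this to beat $C\lambda$ — but $\lambda$ must be chosen before $W$, i.e.\ uniformly in $q$, so we genuinely need a bound independent of $q$. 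This forces the hypothesis $\alpha\le 3/2$: with $\alpha\le 3/2$ one argues that there are $\gtrsim q$ forbidden distances \emph{of size} $\lesssim q^{?}$... The main obstacle, and where I'd focus all the effort, is extracting from Theorem \ref{badly-theorem} (or a sharpened version of it) enough forbidden distances that are \emph{small enough} — the exponent $3/2$ must come from balancing: $\#\{\text{bad }j\}\gtrsim q$ against $\text{size}\lesssim q^{2}$, and $\tfrac1q\sum (jq)^{-\alpha}$ being bounded below uniformly in $q$ is \emph{false} for $\alpha>1/2$ unless the bad $j$'s cluster near $j=O(1)$; so the actual proof must show a positive fraction of the $rq$ returns in Theorem \ref{badly-theorem} happen with bounded index, turning the sum into $\gtrsim \tfrac1q\sum_{j\le J_0}(jq)^{-\alpha}\cdot(\text{multiplicity }\sim q/q)$ — I expect the authors exploit that each small forbidden distance $d_0$ is repeated $\Theta(q)$ times along the period pattern at spacing $q$, contributing $\Theta(q)\cdot\tfrac{1}{d_0^\alpha}$ total energy in a length-$\Theta(q^2)$ window when $d_0$ is itself $\Theta(q)$... — in short, the delicate point is the uniform-in-$q$ lower bound on $\rho_{H_\alpha}(W)$, and it is precisely the arithmetic of how the badly-approximable rotation forces forbidden distances to appear at a controlled, not-too-large scale that yields the threshold $\alpha\le 3/2$.
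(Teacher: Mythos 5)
There are two genuine gaps, and together they land exactly on the step you correctly identified as the crux. First, your reduction is set up wrong: you reduce to a uniform constant lower bound $\rho_{H_\alpha}(W)\ge c_0>0$ for all $W\in\PS_\varphi$, to be compared against $\rho_{\widetilde H}(X)\le C\lambda$. No such constant bound is available (and it is almost certainly false: as the period $k\to\infty$ the periodically Sturmian words approximate $X$ better and better, so their unperturbed energy density tends to $0$). The paper's comparison is finer: because the period of $Y_k$ is a factor of a Sturmian word, the strict boundary condition gives $|n_{p_i}(X([-mk,mk]))-n_{p_i}(Y_k([-mk,mk]))|\lesssim m$ (a discrepancy of $O(1)$ per block of length $k$), so the \emph{difference} of the perturbation contributions to the two energy densities is only $O(\lambda/k)$, not $O(\lambda)$. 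Hence one only needs $\rho_{H_\alpha}(Y_k)\gtrsim 1/k$ uniformly in $k$, a much weaker target than a constant.

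Second, your count of forbidden pairs in $W$ is short by a factor of $k$, which is exactly why your bound $q^{1-2\alpha}$ fails and why you are driven to look for a (nonexistent) refinement of Theorem \ref{badly-theorem} forcing the returns to cluster at small index $j$. You only consider pairs of $1$'s at distances that are exact multiples $jq$ of the period, i.e.\ pairs $(y,y+jq)$. The paper instead fixes \emph{any} two $1$'s at positions $y$ and $y+t$ inside one period ($\sim (lk)^2$ choices of the pair, by the lower bound on the density of $1$'s) and applies Theorem \ref{badly-theorem} to the orbit $x_s=(t+sk)\varphi$ with base point $t\varphi$, producing $\ge rk$ forbidden pairs $(y,\,y+t+sk)$ for each such choice. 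This yields $\gtrsim k^{3}$ forbidden pairs, each of energy $\ge (t+dk^2)^{-\alpha}$, attached to a single period block, hence $H(Y_k([-mk,mk]))\gtrsim m\,k^{3-2\alpha}$ and an energy density $\gtrsim k^{2-2\alpha}\ge \mathrm{const}/k$ precisely when $\alpha\le 3/2$. Combined with the $O(\lambda/k)$ perturbation discrepancy above, this closes the argument with a crude bound $jq\le dk^2$ on the distances; no clustering of the returns near $j=O(1)$ is needed. Your write-up correctly diagnoses that your own estimate is insufficient, but the missing ingredients are the quadratic count of pairs within a period and the strict-boundary-condition control of the perturbation term, not a sharpening of the ergodic theorem.
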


For the proof of this theorem we will use a lemma which will allow us to simplify counting energy density in the case of periodic sequences. We will call a Hamiltonian $H$ summable if there exists a constant $M>0$ such that for every $i\in \Z$ we have $$\sum_{p\in P_i} |\Phi(p)|<M,$$ where $\Phi(p)$ is the energy of $p$ given by $H$ and $P_i$ is the set of all finite patterns containing symbol at the $i$-th coordinate. Since $\sum_k \frac{1}{k^\alpha} <\infty$ for $\alpha>1$ Hamiltonians $H_\alpha$ and all their $(\lambda,P)$-perturbations are summable, so the lemma applies. 

\begin{lemma}\label{lemma-density}
Let $H$ be a one-dimensional summable Hamiltonian.  Then for $k\in \N$ and $X\in\{0,1\}^\Z$ we have
\begin{gather*}
    \rho_H(X)=\liminf_{m\rightarrow \infty} \frac{H(X([-mk,mk]))}{2mk+1}.
\end{gather*}\end{lemma}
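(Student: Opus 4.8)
The plan is to prove the two inequalities separately: one direction is essentially a tautology about limits inferior, while the other requires the summability hypothesis and is obtained from a ``shell'' comparison estimate. Write $a_n := \frac{H(X([-n,n]))}{2n+1}$, so that $\rho_H(X) = \liminf_{n\to\infty} a_n$ by Definition \ref{energy-density}. The quantity on the right-hand side of the lemma is $\liminf_{m\to\infty} a_{mk}$, the limit inferior of a subsequence of $(a_n)$; since passing to a subsequence can only increase the limit inferior, we automatically get $\liminf_{m\to\infty} a_{mk} \ge \rho_H(X)$. It remains to prove the reverse inequality.

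The key lemma to establish first is that, for a summable $H$ with bound $M$ and any $N\le N'$,
\[
\bigl| H(X([-N',N'])) - H(X([-N,N])) \bigr| \le 2(N'-N)\,M .
\]
To see this, expand $H(X([-N',N']))$ as the sum of $\Phi$ over all sub-patterns supported in $[-N',N']$; by translation invariance the energy of a pattern does not depend on its position, so the sub-patterns whose support lies inside $[-N,N]$ contribute exactly $H(X([-N,N]))$. Hence the difference is the sum of $\Phi$ over those sub-patterns whose support meets the shell $S := [-N',N']\setminus[-N,N]$, a set of $2(N'-N)$ sites. Bounding $|\Phi|$ site by site, each site $j\in S$ contributes at most $\sum_{p\in P_j}|\Phi(p)| < M$ by summability (here $P_j$ is precisely the family of patterns through site $j$), so the total is at most $2(N'-N)M$; any over-counting of patterns that meet $S$ in several places only helps.

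To conclude, choose $n_j\to\infty$ realizing $a_{n_j}\to\rho_H(X)$ and put $m_j := \ceil{n_j/k}$, so that $0\le m_jk - n_j < k$ and $m_jk\to\infty$. The shell estimate with $N=n_j$ and $N'=m_jk$ gives $\bigl|H(X([-m_jk,m_jk])) - H(X([-n_j,n_j]))\bigr| < 2kM$, a bound uniform in $j$. Dividing by $2m_jk+1$ and using $\frac{2n_j+1}{2m_jk+1}\to 1$ (numerator and denominator differ by the bounded quantity $2(m_jk-n_j)$), we obtain $a_{m_jk}\to\rho_H(X)$, hence $\liminf_{m\to\infty} a_{mk}\le\rho_H(X)$, which together with the first part yields equality. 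I do not expect a genuine obstacle; the one point to handle carefully is the bookkeeping in the shell estimate — checking that an interval contained in $[-N',N']$ but not in $[-N,N]$ must intersect $S$, and that summability is applied to the correct family $P_j$ — after which everything reduces to routine manipulation of limits inferior.
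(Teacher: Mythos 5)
Your proof is correct. It follows the same skeleton as the paper's --- compare $H(X([-n,n]))$ with $H$ evaluated on the nearest interval whose radius is a multiple of $k$, show the discrepancy is negligible after dividing by the length, and finish with routine $\liminf$ manipulations --- but your key estimate is genuinely different and in fact sharper. The paper introduces the tail quantities $F_i$ (the total energy of patterns through a fixed site that are not contained in the radius-$i$ window around it), bounds the discrepancy between nested windows by $2\sum_{i=0}^{n}F_i$, and then needs $F_i\to 0$ together with a Ces\`aro-mean argument to conclude that the discrepancy is $o(n)$. You instead observe that every pattern contributing to the discrepancy must occupy one of the $2(N'-N)$ shell sites and apply the summability bound $M$ site by site, obtaining the uniform bound $2(N'-N)M\le 2kM$; this constant bound makes the convergence step immediate and dispenses with the Ces\`aro argument altogether. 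Your treatment of the easy inequality (a subsequence can only increase the $\liminf$) and the extraction $m_j=\ceil{n_j/k}$ is fine; the one point worth making explicit is that the same site-by-site bound gives $|H(X([-n,n]))|\le (2n+1)M$, so $\rho_H(X)$ is finite and the step $a_{n_j}\cdot\frac{2n_j+1}{2m_jk+1}\to\rho_H(X)$ is legitimate.
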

\begin{proof}
Fix $k\in \N$ and $X\in\{0,1\}^\Z$. For each $n\in \N$ let $a_n, b_n$ be such that $n=a_nk+b_n$ and $0\leq b_n< k$.
For $n\in \N$ define $$F_n=\sum_{p\in Q_n} |\Phi(p)|$$ where $\Phi(p)$ is again the energy of pattern $p$ and $Q_n$ is the set of finite patterns which contain the symbol at the zero position but are not contained in the segment $[-n,n]$ (since $\Phi$ is translation invariant we can also take $i$-th position and the segment $[-n+i,n+i]$ for any $i\in\Z$ instead). By summability $F_n\rightarrow 0$ when $n\rightarrow \infty$. We see also that $$|H(X([-s,s])-H(X([-t,t])|\leq 2\sum_{i=0}^sF_i$$ whenever $s\geq t$. 
Hence for each $n$ we get 
$$\left |\frac{H(X[-n,n])}{2n+1} - \frac{H(X[-a_nk,a_nk])}{2n+1}\right |\leq \frac{2\sum_{i=0}^n F_i}{2n+1}\xrightarrow{n\rightarrow \infty} 0$$
since the limit of arithmetic means of $F_i$ is equal to the limit of $F_i$. This gives $$\liminf_{n\rightarrow \infty} \frac{H(X([-a_nk,a_nk]))}{2n+1} = \rho_H(X). $$
Moreover $$\lim_{n\rightarrow \infty} \frac{2a_nk+1}{2n+1}=1 $$
so $$\liminf_{n\rightarrow \infty} \frac{H(X([-a_nk,a_nk]))}{2a_nk+1} = \rho_H(X). $$
\end{proof}

\begin{remark}\label{remark-density}
If the frequency of 1's in a sequence $Y\in \{0,1\}^\Z$ is sufficiently small then there are many forbidden patterns in $Y$ consisting of consecutive 0's so the energy density of $Y$ is high even after small perturbation of the Hamiltonian, so we can omit this case. From now on we will assume that there exists $l>0$ such that for each considered $Y\in \PS_\varphi$ we have $\xi_1(Y)\geq l$, where $\xi_1(Y)$ is the frequency of 1's in $Y$.
\end{remark}

Now we can move on to the proof of Theorem \ref{main}.

\begin{proof}
Let $P=\{p_1, p_2, \dots, p_n\}$ be a fixed set of finite patterns. For simplicity denote $H=H_\alpha$ and let $\widetilde{H}=H+H_P$ be a $(\lambda,P)$-perturbation of $H$ for some $\lambda>0$. Let $X$ be a Sturmian word generated by $\varphi$ and for each $k\in \N$ let $Y_k\in \PS_\varphi$ be a sequence of period $k$. We may assume that $Y_k([1,k])$ is a subword of a Sturmian word. In the light of Lemma \ref{lemma-density} we need to show that for small enough $\lambda>0$ and each $k\in\N$ the energy $\widetilde{H}(Y_k([-mk,mk]))$ is not less then the energy $\widetilde{H}(X([-mk,mk]))$ for $m$ large enough. 

For a pattern $p$ denote by $|p|$ its length. Since Sturmian sequences satisfy the strict boundary condition (cf. \cite[Theorem 3.3]{ahj}), there are constants $D_i$ such that for every $s\in \Z$
\begin{gather*}
    |n_{p_i}(X([sk,(s+1)k]))-n_{p_i}(Y_k([sk,(s+1)k]))|<D_i.
\end{gather*}
Extending the segment $[sk,(s+1)k]$ by $|p_i|$ on both sides we get
\begin{gather*}
    |n_{p_i}(X([sk-|p_i|,(s+1)k+|p_i|]))-n_{p_i}(Y_k([sk-|p_i|,(s+1)k+|p_i|]))|<D_i+2|p_i|.
\end{gather*}
Since positions of each pattern $p_i$ are contained in the segment of the form $[sk-|p_i|, (s+1)k+|p_i|]$ we have
\begin{gather*}
    |n_{p_i}(X([-mk,mk]))-n_{p_i}(Y_k([-mk,mk]))|<2m(D_i+2|p_i|)\leq mC
\end{gather*}
where $C=2\max_i (D_i+2|p_i|)$. Hence we get 
\begin{gather}\label{Hp-energy}
    |H_P(X([-mk,mk]))-H_P(Y_k([-mk,mk]))|\leq \sum_{i=1}^n 2m(D_i+2|p_i|)\lambda\leq nmC\lambda.
\end{gather}
Now we need to estimate $H(Y_k([-mk,mk])$ from below by finding many pairs of 1's in $Y_k$. 
Fix $y,t\in [1,k]$ such that $Y_k(y)=Y_k(y+t)=1$. It follows from the periodicity that  $Y_k(y+t+sk)=1$ for each $s\in\Z$. By Proposition \ref{forbidden-pro} a pair of 1's in positions $y$ and $y+t+sk$ is forbidden if and only if $(t+sk)\varphi\in [1-\varphi, \varphi]$. Define the sequence $x_s^k$ by $x_0^k=t\varphi$ and $x_s^k=x_{s-1}^k+k\varphi=(t+sk)\varphi$. By assumption $\varphi\in (3/4,1)$ so the arc $[1-\varphi,\varphi]$ has length greater than $1/2$ and we can apply Theorem \ref{badly-theorem}. Let $d\in \N, r>0$ be such that $$ |\{x_i^k: i\in \{1,2,\dots, dk\}, x_i^k\in P\}| \geq rk$$ for sufficiently large $k$. Then there are at least $rk$ forbidden pairs of 1's at a distance less than $dk+t$ and the energy of those pairs can be estimated from below by $$\frac{rk}{(t+dk^2)^\alpha}\sim \frac{c}{k^{2\alpha -1}} $$
where $c=r/d^\alpha$. For segment $A\subseteq \Z$ let $E(A)$ denote the energy of forbidden pairs of 1's such that position of one of those 1's is in $[1,k]$. Numbers $y$ and $t$ can be chosen in at least $lk$ ways (cf. Remark \ref{remark-density}) and each forbidden pair is counted at most 2 times so
\begin{gather*}
    E([-(dk^2+t), dk^2+t+k])\geq \frac{rk}{(t+dk^2)^\alpha}\frac{(lk)^2}{2} \sim \frac{c_1}{k^{2\alpha-3}}= c_1k^{3-2\alpha}
\end{gather*}
for $c_1=\frac{1}{2}cl^2$. Since $[-(dk^2+t), dk^2+t+k]\subset [-(d+2)k^2,(d+2)k^2]$ we get that 
\begin{gather*}
    E([-(d+2)k^2,(d+2)k^2])\gtrsim c_1k^{3-2\alpha}.
\end{gather*}
The same argument gives the same estimate for energy of pairs of 1's in $[sk-(d+2)k^2,sk+(d+2)k^2]$ such that position of one of those 1's is in $[sk+1,(s+1)k]$ for $s\in \Z$. For $|s|\leq m-(d+2)$ we have  $[sk-(d+2)k^2,sk+(d+2)k^2]\subset [-mk,mk]$ so taking all the mentioned pairs of 1's we get the estimate
\begin{gather*}
    H(Y_k([-mk,mk])) \gtrsim (m-(d+2)k) c_1k^{3-2\alpha}.
\end{gather*}
If $m$ is large enough we have 
\begin{gather}\label{H-energy}
    H(Y_k([-mk,mk])) \gtrsim \frac{1}{2} m c_1k^{3-2\alpha}.
\end{gather}
Since $H(X([-mk,mk]))=0$, combining (\ref{Hp-energy}) and (\ref{H-energy}) gives 
\begin{gather*}
H(Y_k([-mk,mk]))-H(X([-mk,mk])) \gtrsim \frac{1}{2} m c_1k^{3-2\alpha} - nmC\lambda. 
\end{gather*}
By assumption $\alpha\leq 3/2$ so $\frac{1}{2} c_1k^{3-2\alpha}$ is bounded from below by some positive constant. Hence if $\lambda$ is very small then the right side of above inequality is positive which completes the proof. 
\end{proof}

Another natural class of words that could possibly indicate instability of Sturmian ground states generated by $\varphi$ is the class of Sturmian words generated by $\psi$, where $\psi$ is close to $\varphi$.

Denote by $\mathcal{S}_\psi$ the class of Sturmian words generated by $\psi$ and put $$\mathcal{S}= \bigcup_{n\in\N} \mathcal{S}_{\varphi-\frac{1}{n}} .$$ Then we have the following theorem.

\begin{theorem}
Assume that $\alpha\leq 2$. If $X$ is a Sturmian word generated by $\varphi$ then $X$ is stable with respect to $\mathcal{S}$ for the Hamiltonian $H_\alpha$. 
\end{theorem}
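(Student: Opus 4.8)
The plan is to argue as in the proof of Theorem~\ref{main}, reducing to a lower bound on $\rho_{H_\alpha}(W)$ for $W\in\mathcal S_\psi$ with $\psi=\varphi-\tfrac1n$, but now keeping explicit track of the dependence on $n$, since $\psi\to\varphi$. Fix a finite set $P=\{p_1,\dots,p_N\}$ and a $(\lambda,P)$-perturbation $\widetilde H=H_\alpha+H_P$; put $\delta_i=\widetilde\Phi(p_i)-\Phi(p_i)$, so $|\delta_i|<\lambda$. Since $H_P$ is finite-range and the Sturmian systems are uniquely ergodic, $\frac1{2k+1}H_P(\,\cdot\,[-k,k])$ converges to $\sum_i\xi_{p_i}(\,\cdot\,)\delta_i$ (here $\xi_p$ denotes the frequency of the pattern $p$); combined with $\rho_{H_\alpha}(X)=0$ (Theorem~\ref{forbidden}) this gives $\rho_{\widetilde H}(X)=\sum_i\xi_{p_i}(X)\delta_i$ and $\rho_{\widetilde H}(W)\ge\rho_{H_\alpha}(W)+\sum_i\xi_{p_i}(W)\delta_i$, hence $\rho_{\widetilde H}(W)-\rho_{\widetilde H}(X)\ge\rho_{H_\alpha}(W)-\lambda\sum_i|\xi_{p_i}(W)-\xi_{p_i}(X)|$. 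The frequency of a fixed word of length $\ell$ in the $\psi$-Sturmian system is the length of an interval whose endpoints have the form $k\psi\bmod1$ with $|k|\le\ell$, hence is a Lipschitz function of $\psi$ with constant $O(\ell)$, so there is $C=C(P,\varphi)$ with $\sum_i|\xi_{p_i}(W)-\xi_{p_i}(X)|\le C|\varphi-\psi|=C/n$. Thus it suffices to prove
\[
\rho_{H_\alpha}(W)\ \ge\ c\,n^{-(\alpha-1)}
\]
for some $c=c(\varphi,\alpha)>0$ and all large $n$: choosing $\lambda$ with $\lambda C\le c$ then yields $\rho_{\widetilde H}(W)-\rho_{\widetilde H}(X)\ge c\,n^{-(\alpha-1)}-\lambda C n^{-1}\ge c\,n^{-1}(n^{2-\alpha}-1)\ge0$, precisely because $\alpha\le2$. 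For $n=1$ we have $\mathcal S_{\varphi-1}=\mathcal S_\varphi\ni X$, so nothing is needed, and for the finitely many remaining small $n$ (where $\psi$ is not close to $\varphi$) one notes that $W$ is not $\varphi$-Sturmian, so by Theorem~\ref{forbidden} it contains a $\varphi$-forbidden pattern of positive frequency; hence $\rho_{H_\alpha}(W)$ is a positive constant on $\mathcal S_\psi$ and shrinking $\lambda$ further disposes of those cases.

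For the main estimate, discard the nonnegative contribution of forbidden zero-words. By Proposition~\ref{forbidden-pro} a distance $t$ is forbidden for $\varphi$ iff $t\varphi\bmod1\in[1-\varphi,\varphi]$, and by unique ergodicity the frequency in any $W\in\mathcal S_\psi$ of the pattern ``$1$ at positions $0$ and $t$'' equals $\mu_\psi(t)=\bigl|[\psi,1)\cap([\psi,1)-t\psi)\bigr|=\max\bigl(0,(1-\psi)-\mathrm{dist}(t\psi,\Z)\bigr)$, which is at least $\tfrac12(1-\psi)$ whenever $\mathrm{dist}(t\psi,\Z)\le\tfrac12(1-\psi)$. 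Therefore
\[
\rho_{H_\alpha}(W)\ \ge\ \frac{1-\psi}{2}\sum_{t\in I}\frac1{t^\alpha},\qquad I=\bigl\{t\in\N:\ \mathrm{dist}(t\psi,\Z)\le\tfrac12(1-\psi),\ \ t\varphi\bmod1\in[1-\varphi,\varphi]\bigr\}.
\]
The heart of the proof is to show that $I$ contains $\asymp n$ elements inside a window $[c_1n,c_2n]$, with $c_1,c_2$ depending only on $\varphi$. The first defining condition of $I$ alone is met by the integer $t_q$ nearest to $q/(1-\psi)$ for every $q\in\N$: for such $t_q$ the number $t_q(1-\psi)$ is within $\tfrac12$ of $q$, so $t_q\psi\bmod1$ is within $\tfrac12(1-\psi)$ of $0$. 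Then, using $t_q\varphi=t_q\psi+t_q/n$, the second condition says that $t_q/n\bmod1$ lies in $[1-\varphi,\varphi]$ up to an error $\le\tfrac12(1-\psi)$; since $\varphi\in(\tfrac34,1)$, the arc $[1-\varphi,\varphi]$ has a safety margin larger than $1-\psi\approx1-\varphi$, so this holds for all $q$ in an interval of length $\asymp n$, each of which forces $t_q\in[c_1n,c_2n]$ and the $t_q$ are pairwise distinct. Summing these $\asymp n$ terms, each with $t_q^{-\alpha}\ge(c_2n)^{-\alpha}$, gives $\rho_{H_\alpha}(W)\gtrsim(1-\psi)\cdot n\cdot(c_2n)^{-\alpha}\gtrsim n^{-(\alpha-1)}$, with implied constant depending only on $\varphi,\alpha$.

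I expect the counting step to be the main obstacle. One must control $t\psi\bmod1$ and $t\varphi\bmod1$ \emph{simultaneously}, and because $n\psi-n\varphi\in\Z$ the pair $(t\psi,t\varphi)\bmod1$ is confined to a one-dimensional subtorus of $\T^2$; qualitative equidistribution there only yields $\rho_{H_\alpha}(W)>0$, which is useless for a uniform choice of $\lambda$. What is needed is the quantitative statement that $I$ already populates a window of length $\asymp n$ situated around $t\asymp n$: if the relevant distances only appeared around $t\asymp n^2$, the argument would give the weaker bound $\rho_{H_\alpha}(W)\gtrsim n^{-2(\alpha-1)}$ and hence only $\alpha\le\tfrac32$. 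Making the sketched three-distance-type argument rigorous requires care with the rounding $t_q\approx q/(1-\psi)$, with the accumulated errors in the condition $t\varphi\bmod1\in[1-\varphi,\varphi]$, and with the fact that it is $\varphi$ (not $\psi$) that must sit well inside $(\tfrac34,1)$ to leave the required margin; one also must verify that an interval of $q$'s of length $\asymp n$ (equivalently, a $t$-window of length $\asymp n$) indeed produces a lower bound of the stated order. The remaining ingredients are routine: additivity of energy density via summability and unique ergodicity, the Lipschitz dependence of pattern frequencies on the rotation number, the treatment of the finitely many small $n$, and the final comparison, in which $\alpha>1$ is used for summability of $H_\alpha$ and $\alpha\le2$ is exactly what makes $\rho_{\widetilde H}(W)\ge\rho_{\widetilde H}(X)$ hold uniformly in $n$.
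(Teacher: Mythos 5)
Your proposal is correct and follows the same strategy as the paper's proof: show that each symbol $1$ in a word of $\mathcal S_{\varphi-1/n}$ participates in $\gtrsim n$ $\varphi$-forbidden pairs at distances $\lesssim n$, deduce $\rho_{H_\alpha}(W)\gtrsim n^{1-\alpha}$, and beat the perturbation's gain of order $\lambda/n$, with $\alpha\le 2$ entering at exactly the same point. The differences are in execution: the paper restricts to the single perturbation $H_1$ and counts the good distances by taking $k\in[1,n]$ with $k/n$ in a fixed arc and appealing to unique ergodicity of the $\varphi$-rotation along those $\approx(1-\varphi)n$ consecutive integers, whereas you select $t$ near multiples of $1/(1-\psi)$ (a three-distance-type construction) and control $t/n\bmod 1$; you also treat arbitrary finite-range perturbations via Lipschitz dependence of pattern frequencies on the rotation number and dispose of the finitely many small $n$ explicitly, both of which the paper only gestures at ("the general case is similar"). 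One caveat: your safety-margin step needs the arc $[1-\varphi,\varphi]$ to have length exceeding $1-\psi$, i.e.\ $\varphi>2/3$; the theorem as stated does not impose $\varphi\in(\tfrac34,1)$, though the paper's own interval manipulations likewise implicitly require $\varphi$ to be well inside $(\tfrac12,1)$, so this is a shared, minor restriction rather than a gap specific to your argument.
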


\begin{proof}
For simplicity we will consider only small perturbations of $H_\alpha$ of the form $\widetilde{H}_\alpha= H_\alpha + H_1$ (where $H_1$ is a Hamiltonian favoring the presence of 1's). The proof of the general case is similar. 
 
Let $S_\varphi$ be a Sturmian word generated by $\varphi$.
For $n\in \N, n>0$ consider the Sturmian sequence $S_n\in \mathcal{S}_{\varphi-\frac{1}{n}}$ given by
\begin{equation*}
    S_n(k) = \begin{cases}
               0               & \text{when} \ k\varphi \in [0,\varphi-\frac{1}{n}) \\
               1                & \text{otherwise}
           \end{cases}
\end{equation*}

    We will show that for any $\lambda=H_1(1)$ and big enough $n$, the energy density of $S_n$ with respect to $\widetilde{H}_\alpha$ is bigger than the energy density of $S_\varphi$.


    For $a\in \Z$ such that $S_n(a)=1$, let $E(a)$ denote the energy of pairs of 1's that include 1 at position $a$. 
Without loss of generality we may assume that $a=0$ and put $E=E(0)$. Then by Proposition \ref{forbidden-pro} a symbol 1 at position $k$ forms a forbidden pair with 1 at position 0 if and only if 
$$k\varphi \mod 1 \in [1-\varphi, \varphi].$$

    Hence elements at positions $0$ and $k$ in $S_n$ form a forbidden pair if and only if the following are satisfied simultaneously: 
    \begin{gather}\label{*}
     \begin{cases} 
    k(\varphi-1/n) \mod 1 \in [\varphi-1/n,1) \\
    k\varphi \mod 1 \in [1-\varphi, \varphi]
    \end{cases}      
    \end{gather}
Fix $\varepsilon$ such that $0<\varepsilon<1-\varphi$ (it is important that $\varepsilon$ depends only on $\varphi$). Then if $k$ satisfies the following 
 \begin{gather}
     \begin{cases} 
    k/n \mod 1 \in [\varepsilon,1-\varphi) \\
    k\varphi \mod 1 \in [\varphi-\varepsilon, \varphi]
    \end{cases}
    \end{gather}
    then $k$ also satisfies (\ref{*}). Among $\{1/n, 2/n, 3/n, \dots, n/n\}$ there are $\lfloor n(1-\varphi-\varepsilon) \rfloor$ consecutive numbers belonging to the interval $[\varepsilon, 1-\varphi)$. Hence and by the ergodicity of the rotation by $\varphi$ we have  
$$|\{k: k\in [1,n], k/n\in [\varepsilon, 1-\varphi), k\varphi\in [\varphi-\varepsilon,\varphi)\}| > cn$$
    for large enough $n$ and some $c$ independent of $n$. This means that every 1 in $S_n$ belongs to at least $cn$ forbidden pairs with distances at most $n$. Thus, the energy contribution from these pairs may be estimated from below by 
    $$\frac{cn}{n^\alpha}.$$
    Since the density of 1's in $S_n$ is equal to $1-\varphi+\frac{1}{n}$, the average energy contribution from forbidden pairs is may be estimated from below by 
    $$\frac{c_1n}{n^\alpha},$$
    where $c_1=c(1-\varphi)$, while the average energy from the additional 1's in the sequence equals $\lambda/n$. 
    Since $\alpha\leq 2$, if $\lambda$ is small enough, then for large enough $n$ we have 
$$\frac{c_1n}{n^\alpha}> \frac{\lambda}{n},$$
    so the energy density of $S_\varphi$ is smaller than the energy density of $S_n$ for large enough $n$.
\end{proof}

{\bf Acknowledgments} We would like to thank the National Science Centre (Poland) for a financial support under Grant No. 2016/22/M/ST1/00536.

\end{document}